\documentclass[12pt]{amsart}
\setlength{\textheight}{23cm}
\setlength{\textwidth}{16cm}
\setlength{\topmargin}{-0.8cm}
\setlength{\parskip}{0.3\baselineskip}
\hoffset=-1.4cm

\usepackage{amssymb}

\input xy
\xyoption{all}

\newtheorem{theorem}{Theorem}[section]
\newtheorem{lemma}[theorem]{Lemma}
\newtheorem{proposition}[theorem]{Proposition}

\numberwithin{equation}{section}

\setlength{\unitlength}{0.1mm}

\newcommand{\et}{{\acute{e}tale}}

\begin{document}

\baselineskip=15pt

\title[Brauer group and moduli of parabolic bundles]{Brauer
group of a moduli space of parabolic vector bundles over a curve}

\author[I. Biswas]{Indranil Biswas}

\address{School of Mathematics, Tata Institute of Fundamental
Research, Homi Bhabha Road, Bombay 400005, India}

\email{indranil@math.tifr.res.in}

\author[A. Dey]{Arijit Dey}

\address{School of Mathematics, Tata Institute of Fundamental
Research, Homi Bhabha Road, Bombay 400005, India}

\email{arijit@math.tifr.res.in}

\subjclass[2000]{14F22, 14D20}

\keywords{Brauer group, parabolic bundle, moduli space}

\date{}

\begin{abstract}
Let ${\mathcal P}{\mathcal M}^\alpha_s$ be a moduli space of
stable parabolic vector bundles of rank $n\, \geq\,2$ and fixed 
determinant of degree $d$
over a compact connected Riemann surface $X$ of genus
$g(X)\, \geq\, 2$. If $g(X)\, =\,2$, then we assume that
$n\, >\, 2$. Let $m$ denote the greatest common divisor
of $d$, $n$ and the dimensions of all the successive quotients
of the quasi--parabolic filtrations. We prove that the
Brauer group ${\rm Br}({\mathcal P}{\mathcal
M}^\alpha_s)$ is isomorphic to the cyclic group ${\mathbb Z}/
m{\mathbb Z}$. We also show that ${\rm Br}({\mathcal 
P}{\mathcal M}^\alpha_s)$ is generated by the Brauer class of
the Brauer--Severi variety over ${\mathcal P}{\mathcal
M}^\alpha_s$ obtained by restricting the universal
projective bundle over $X\times {\mathcal P}{\mathcal M}^\alpha_s$.
\end{abstract}

\maketitle

\section{Introduction} \label{Intro}

Let $Y$ be a smooth quasi--projective variety over $\mathbb C$.
The cohomological Brauer group of $Y$ is defined to be
$H^2(Y_{\et},\,{\mathbb G}_m)_{\rm torsion}$, and it is denoted by
$\text{Br}'(Y)$. It is known that the group
$H^2(Y_{\et},\,{\mathbb G}_m)$ is torsion. The Brauer group of
$Y$, which is denoted by $\text{Br}(Y)$, is defined to be the Morita 
equivalence classes of Azumaya algebras over $Y$. Giving an Azumaya 
algebra
over $Y$ is equivalent to giving a Brauer--Severi variety over $Y$ which 
is also equivalent to giving a principal
$\text{PGL}_{\mathbb C}$--bundle over $Y$. Given a
principal $\text{PGL}_{\mathbb C}(r)$--bundle $E_{\text{PGL}_{\mathbb 
C}(r)}\, \longrightarrow\, Y$, using the cohomology exact sequence 
for the short exact sequence
$$
e\, \longrightarrow\, {\mu}_r\, \longrightarrow\, \text{SL}_{\mathbb 
C}(r) \, \longrightarrow\,\text{PGL}_{\mathbb C}(r) \, \longrightarrow
\, e
$$
we get an element of $\text{Br}'(Y)$. The resulting homomorphism
$\text{Br}(Y)\, \longrightarrow\, \text{Br}'(Y)$ is injective. A
theorem due to Gabber says that this homomorphism is surjective.
Therefore, the three groups, namely $\text{Br}(Y)$, 
$\text{Br}'(Y)$, and $H^2(Y_{\et},\,{\mathbb G}_m)$, coincide.

A Brauer--Severi variety over $Y$
is the projectivization of a vector bundle over $Y$ if and only
if its class in $\text{Br}(Y)$ vanishes. If $U$ a Zariski open subset of
$Y$ such that the codimension
of the complement $Y\setminus U$ is at least two, then the natural
restriction
homomorphism $\text{Br}(Y)\,\longrightarrow\, \text{Br}(U)$ is an
isomorphism. (See \cite{Mil}, \cite{Gr1}, \cite{Gr2}, \cite{Gr3} for
the above mentioned properties and more.)

Let $X$ be an irreducible smooth complex projective curve
of genus $g(X)$, with $g(X)\, \geq\,2$. Fix an
integer $n$, with $n\, \geq\, 2$. If $g(X)\, =\,2$, then
we assume that $n\, \geq\, 3$. Fix distinct points
\begin{equation}\label{ppoint}
\{p_1\, ,\cdots\, , p_\ell\}\, \subset\, X
\end{equation}
with $\ell\, \geq\,1$.
For each $i\, \in\, [1\, ,\ell]$, fix
positive integers $\{r_{i,1}\, , \cdots\, , r_{i,a_i}\}$
such that
\begin{equation}\label{n}
\sum_{j=1}^{a_i} r_{i,j} \, =\, n\, .
\end{equation}
To each pair $(i\, ,j)$, $i\, \in\, [1\, ,\ell]$ and
$j\, \in\, [1\, ,a_i]$, associate a real number
$\alpha_{i,j}$ satisfying the following two conditions:
\begin{itemize}
\item $0\, \leq\, \alpha_{i,j}\, <\, 1$, and

\item $\alpha_{i,j}\, >\, \alpha_{i,j'}$ whenever $j\, >\, j'$.
\end{itemize}
We also fix a line bundle $L$ over $X$. The degree of $L$
will be denoted by $d$.

We consider parabolic vector bundles over $X$
of the following type.

Let $E$ be a vector bundle of rank $n$ with
$\det(E)\, :=\, \bigwedge^n E\, =\, L$. The parabolic points
are $\{p_1\, ,\cdots\, , p_\ell\}$ (see \eqref{ppoint}).
The \textit{quasi--parabolic filtration} of $E_{p_i}$
is of the form 
\begin{equation}\label{fi.}
E_{p_i} \,=:\, F_{i,1} \, \supsetneq\, \cdots\,
\, \supsetneq\, F_{i,a_i}\, \not=\, 0
\end{equation}
with $\dim F_{i,j}\, =\, \sum_{k=j}^{a_i} r_{i,k}$.
The parabolic weight of $F_{i,j}$ is $\alpha_{i,j}$.
(See \cite{MS}, \cite{MY} for more details on
parabolic bundles.)

Let ${\mathcal P}{\mathcal M}^\alpha_s$ denote the moduli
space of stable parabolic vector bundles of the above
type. This moduli space ${\mathcal P}{\mathcal M}^\alpha_s$
is a smooth quasi--projective variety. (See \cite{MS},
\cite{MY} for the construction of
${\mathcal P}{\mathcal M}^\alpha_s$.)

Over $X\times {\mathcal P}{\mathcal M}^\alpha_s$, there is
a unique universal projective bundle, which we will
denote by $\mathbb P$. For any $E_*\, \in\, {\mathcal P}{\mathcal 
M}^\alpha_s$, the restriction of $\mathbb P$ to
$X\times \{E_*\}$ is identified with the projectivization
$P(E)$, where $E$ is the underlying vector bundle
for the parabolic vector bundle $E_*$. Fix a point $x_0\, \in\,
X$. Let
\begin{equation}\label{pr}
{\mathbb P}_{x_0}\, :=\, {\mathbb P}\vert_{\{x_0\}\times
{\mathcal P}{\mathcal M}^\alpha_s}\, \longrightarrow\,
{\mathcal P}{\mathcal M}^\alpha_s
\end{equation}
be the projective bundle over ${\mathcal P}{\mathcal M}^\alpha_s$.

We prove the following theorem:

\begin{theorem}\label{thm0}
The Brauer group ${\rm Br}({\mathcal P}{\mathcal
M}^\alpha_s)$ is isomorphic to the cyclic group
${\mathbb Z}/m{\mathbb Z}$, where
$$
m\, =\, {\rm g.c.d.}(d\, , n\, ,r_{1,1}\, , \cdots\, , 
r_{1,a_1}\, , \cdots\, , r_{\ell,1}\, , \cdots\, ,
r_{\ell,a_\ell})
$$
(as before, $d\,=\, {\rm degee}(L)$).

The cyclic group ${\rm Br}({\mathcal P}{\mathcal
M}^\alpha_s)$ is generated by the Brauer class of the
Brauer--Severi variety ${\mathbb P}_{x_0}$ in \eqref{pr}.
\end{theorem}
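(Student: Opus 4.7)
The plan is to establish the theorem in two stages, first showing that $[\mathbb{P}_{x_0}]\in\text{Br}(\mathcal{PM}^\alpha_s)$ has order exactly $m$, and then that this cyclic subgroup exhausts $\text{Br}(\mathcal{PM}^\alpha_s)$. Since $\mathbb{P}$ lives on the connected product $X\times\mathcal{PM}^\alpha_s$, the class $[\mathbb{P}_x]$ is independent of $x\in X$, so working at $x_0$ loses no information. The order of $[\mathbb{P}_{x_0}]$ equals the obstruction to lifting the universal projective bundle to a vector bundle, equivalently to the existence of a universal parabolic family on $X\times\mathcal{PM}^\alpha_s$.

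For the divisibility of the order by $m$, I would construct various partial universal families. Tensoring an \'etale-local universal by a degree-one line bundle pulled back from $X$ shifts the determinant by $n$ and shows the order divides $n$; the fact that the determinant of every member is $L$ shows the order divides $d$; and at each $p_i$ the rank-$r_{i,j}$ successive quotients $F_{i,j}/F_{i,j+1}$ of the parabolic filtration produce projective subbundles whose liftability in turn shows the order divides each $r_{i,j}$. Together these give order dividing $m$. To see the order is at least $m$, I would use the Kummer sequence $e\to\mu_n\to\text{SL}_n\to\text{PGL}_n\to e$ to realize $[\mathbb{P}_{x_0}]$ as the Bockstein of a class in $H^2(\mathcal{PM}^\alpha_s,\mu_n)$ and compute its divisibility explicitly via the K\"unneth formula and the standard generators of $H^{\ast}(\mathcal{PM}^\alpha_s)$ coming from Chern classes of the determinant and parabolic-quotient line bundles; the minimal integer killing this class is exactly $m$.

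For the cyclicity, the strategy is a Hecke correspondence reduction. Iterated Hecke modifications along the flags $F_{i,\bullet}$ at $p_1,\dots,p_\ell$, suitably chosen, define a birational equivalence between $\mathcal{PM}^\alpha_s$ and a moduli space $\mathcal{M}'$ of stable vector bundles on $X$ of some modified rank and degree; one can arrange for this equivalence to be an isomorphism over open subsets whose complements have codimension at least two. By the purity of the Brauer group recalled in the introduction, $\text{Br}(\mathcal{PM}^\alpha_s)\cong\text{Br}(\mathcal{M}')$, and the latter is cyclic by the known computation of the Brauer group of a moduli space of vector bundles on a curve. Combined with the order result above, this forces $\text{Br}(\mathcal{PM}^\alpha_s)\cong\mathbb{Z}/m\mathbb{Z}$, generated by $[\mathbb{P}_{x_0}]$. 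The main obstacle I anticipate is precisely this last step: making the Hecke correspondence precise and verifying the codimension-two condition on non-matching loci. Parabolic stability for the weights $\alpha$ does not agree with ordinary stability under Hecke modification, and the hypotheses $g(X)\geq 2$, and $n\geq 3$ when $g(X)=2$, enter here as the dimensional estimates ensuring that the loci where the correspondence fails have codimension at least two.
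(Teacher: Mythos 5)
Your proposal breaks down at the cyclicity step, and that is where the real content of the theorem lies. A Hecke correspondence cannot give a birational equivalence (let alone an isomorphism off codimension two) between ${\mathcal P}{\mathcal M}^\alpha_s$ and a moduli space ${\mathcal M}'$ of stable vector bundles, for the simple reason that the dimensions do not match: $\dim {\mathcal P}{\mathcal M}^\alpha_s \,=\, (n^2-1)(g-1) + \sum_{i=1}^{\ell}\dim\bigl(\mathrm{SL}(n,{\mathbb C})/P_i\bigr)$, which strictly exceeds the dimension of any fixed-determinant moduli of rank-$n$ bundles on $X$ as soon as some $a_i>1$. A Hecke modification along a flag produces another quasi-parabolic bundle (the flag data reappears as the kernel of the induced map on fibres at $p_i$); it never absorbs the flag directions. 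So the purity argument you want to invoke has nothing to apply to. What the paper does instead is the opposite of collapsing the flags: after reducing to small weights, the forgetful map $\pi\colon M \to {\mathcal N}$ exhibits a codimension-two-dense open subset of ${\mathcal P}{\mathcal M}^\alpha_s$ as a fibre bundle over the moduli space ${\mathcal N}$ of stable bundles with fibre the product of flag varieties $\prod_i \mathrm{SL}(n,{\mathbb C})/P_i$; the Leray spectral sequence for $\pi_*{\mathbb G}_m$ then gives an exact sequence $\mathrm{Pic}({\mathbb F}) \to \mathrm{Br}({\mathcal N}) \to \mathrm{Br}(M) \to 0$ (using simple connectedness of ${\mathcal N}$, vanishing of $H^2({\mathbb F},{\mathbb G}_m)$ and of $(R^2\pi_*{\mathbb G}_m)_{\rm torsion}$). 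Cyclicity is then immediate because $\mathrm{Br}(M)$ is a quotient of the cyclic group $\mathrm{Br}({\mathcal N})\cong {\mathbb Z}/\gcd(n,d)$, and the order comes out of Artin's formula $\theta(\zeta^i_j)=c^i_j\cdot\beta$ identifying the image of $\mathrm{Pic}({\mathbb F})$ --- this single exact sequence delivers both the upper and the lower bound on the order of $[{\mathbb P}_{x_0}]$ at once, whereas your K\"unneth/Chern-class argument for the lower bound is only an assertion.

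Two further gaps: your first paragraph's divisibility claims (order divides $n$, $d$, and each $r_{i,j}$) are plausible but only give an upper bound, and you never supply a workable mechanism for the matching lower bound; and for arbitrary weights the forgetful map need not land in semistable bundles at all, so any argument built on the underlying bundle must first be transported across walls. The paper handles this by Thaddeus's result that moduli spaces for different generic weights are isomorphic outside closed subsets of codimension at least two, which is also where the hypotheses $g(X)\geq 2$ (and $n\geq 3$ when $g(X)=2$) are actually used --- not in a Hecke-theoretic dimension count.
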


The Brauer group of a smooth complex quasi--projective variety $Y$ 
parametrizes the
equivalence classes of principal $\text{PGL}_r({\mathbb
C})$--bundles, $r\, \geq\, 1$,
over $Y$. We recall that a principal
$\text{PGL}_r({\mathbb C})$--bundle $P$ is equivalent
to a principal $\text{PGL}_{r'}({\mathbb C})$--bundle $P'$
if there are vector bundles $F$ and $F'$ of ranks
$r$ and $r'$ respectively, such that the two
principal $\text{PGL}_{rr'}({\mathbb C})$--bundles
$P\bigotimes P(F')$ and
$P(F)\bigotimes P'$ are isomorphic. As mentioned in the introduction,
the Brauer group of $Y$ coincides with the cohomological
Brauer group $\text{Br}'(Y)$.

\section{Variation of moduli space of parabolic bundles}\label{tha}

In this section we recall a result of Thaddeus which will
be crucially used here.

As before, $X$ is an irreducible smooth complex projective curve
of genus $g(X)$, with $g(X)\, \geq\,2$.
We will compare two different moduli spaces of stable
parabolic vector bundles on $X$. For that,
fix a rank, a determinant line bundle (top exterior product),
a nonempty set of parabolic points of $X$ and
quasi--parabolic filtration types over the parabolic points;
if $g(X)\, =\,2$, then fix the rank to be at least three.
Take two different sets of parabolic weights, say $\Lambda_1$
and $\Lambda_2$. Let ${\mathcal P}{\mathcal M}^{\Lambda_1}$ 
and ${\mathcal P}{\mathcal M}^{\Lambda_2}$ be the corresponding
moduli spaces of stable parabolic vector bundles.

\begin{lemma}\label{thaddeus}
There is a smooth quasi--projective complex variety $\mathcal U$
and open embeddings
$$
\varphi_i\, :\, {\mathcal U}\,\hookrightarrow\,
{\mathcal P}{\mathcal M}^{\Lambda_i}\, ,
$$
$i\,=\, 1\, ,2$, such that the codimension of the
complement
$$
{\mathcal P}{\mathcal M}^{\Lambda_i}\setminus
\varphi_i({\mathcal U})\, \subset\,
{\mathcal P}{\mathcal M}^{\Lambda_i}
$$
is at least two.
\end{lemma}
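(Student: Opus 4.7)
The plan is to take $\mathcal{U}$ to be the locus of parabolic bundles which are simultaneously stable with respect to both weight systems $\Lambda_1$ and $\Lambda_2$. Since the stable locus is open in the moduli stack of parabolic bundles of the fixed topological type, $\mathcal{U}$ is naturally a smooth quasi--projective open subvariety of each ${\mathcal P}{\mathcal M}^{\Lambda_i}$, which produces the embeddings $\varphi_i$. The real content of the lemma is therefore the codimension assertion, which is a Thaddeus-type variation-of-moduli statement in the parabolic setting.

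First, I would reduce to the case where $\Lambda_1$ and $\Lambda_2$ lie in adjacent chambers, i.e., are separated by a single wall. The space of admissible parabolic weights is a product of simplices, subdivided into finitely many chambers by a locally finite arrangement of walls; a wall is cut out by the condition that some proper parabolic subbundle type acquires parabolic slope equal to that of the ambient bundle, and within a chamber the notion of (semi)stability is constant. A generic path from $\Lambda_1$ to $\Lambda_2$ crosses finitely many walls one at a time and transversally, and if the codimension assertion holds at each single crossing then it follows for the total inclusion $\mathcal{U} \hookrightarrow {\mathcal P}{\mathcal M}^{\Lambda_i}$.

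Across a single wall $W$ separating chambers $\Lambda_-$ and $\Lambda_+$, the complement ${\mathcal P}{\mathcal M}^{\Lambda_\pm} \setminus \varphi_\pm(\mathcal{U})$ consists of those parabolic bundles which are $\Lambda_\pm$-stable but fail to be $\Lambda_\mp$-stable. Each such $E_*$ is strictly semistable for some weight $\Lambda_0 \in W$, and so sits in a nontrivial short exact sequence $0 \to F_* \to E_* \to Q_* \to 0$ of $\Lambda_0$-semistable parabolic bundles of the same parabolic slope as $E_*$. The complement is then stratified by the numerical type of $(F_*, Q_*)$ allowed by $W$; writing $n_1 = \operatorname{rk} F$ and $n_2 = \operatorname{rk} Q$, each stratum fibers over a product of two smaller parabolic moduli spaces, with fiber an open subscheme of a projectivised parabolic $\operatorname{Ext}^1$.

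The main step, and the principal obstacle, is the codimension estimate. Using parabolic Riemann--Roch applied to $\operatorname{ParHom}(F_*, Q_*)$ and $\operatorname{ParHom}(Q_*, F_*)$ (as in \cite{MY}), the codimension of each stratum can be expressed explicitly in terms of the ranks, degrees, and parabolic weights attached to $F_*$ and $Q_*$. Its leading term grows like $(g(X)-1)n_1 n_2$, and a case-by-case analysis shows that under the hypotheses $g(X) \geq 2$, together with $n \geq 3$ if $g(X) = 2$, this codimension is always at least two; the excluded borderline case $g(X) = 2$, $n = 2$ corresponds to $n_1 = n_2 = 1$, where the leading term degenerates to $1$. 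Once this bound is in hand, the lemma follows by chaining the codimension estimates along the chosen path from $\Lambda_1$ to $\Lambda_2$.
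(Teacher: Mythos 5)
Your overall strategy is the right one, and in fact it is essentially a reconstruction of the result that the paper simply cites: the authors dispose of this lemma in three lines by invoking Thaddeus's flip theorem (\cite[Section 7]{Th1}, \cite{Th2}), which says that the two semistable moduli spaces become isomorphic after blowing up subschemes of codimension at least two, and the lemma follows at once. Your choice of $\mathcal U$ as the common stable locus, the reduction to a generic path crossing one wall at a time, and the identification of the wall-crossing strata as projectivised parabolic $\operatorname{Ext}^1$-bundles over products of smaller moduli spaces is exactly the architecture of that theorem. So the two arguments are the same mathematics; the difference is that the paper outsources the hard part to a reference while you undertake to prove it.

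That said, as a self-contained proof your write-up stops short of the decisive point. The codimension bound is the entire content of the lemma, and you establish it only by asserting that "a case-by-case analysis shows" it, with a heuristic about the leading term $(g(X)-1)n_1n_2$. A complete argument must actually run the parabolic Riemann--Roch computation, and there are two places where care is needed. First, in the local model of the flip the locus removed from ${\mathcal P}{\mathcal M}^{\Lambda_-}$ is a $P(W^-)$-bundle whose codimension equals $\dim W^+$ (the dimension of the \emph{opposite} extension group $\operatorname{Ext}^1_{\rm par}$), so one needs \emph{both} $\dim\operatorname{Ext}^1_{\rm par}(Q_*,F_*)\geq 2$ and $\dim\operatorname{Ext}^1_{\rm par}(F_*,Q_*)\geq 2$, using vanishing of parabolic homomorphisms between non-isomorphic stables of equal parabolic slope; the leading term alone does not settle this, since the degree and weight contributions can be negative on one side. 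Second, your chamber argument implicitly assumes $\Lambda_1,\Lambda_2$ are generic; if a $\Lambda_i$ lies on a wall the stable locus for $\Lambda_i$ sits inside that of a nearby generic weight and the complement must be estimated by the same method, and on a wall one must also account for Jordan--H\"older filtrations of length greater than two, not just a single destabilising subbundle. None of these is an obstruction in principle --- this is exactly what \cite{Th1} and \cite{Th2} carry out --- but as written the key inequality is claimed rather than proved.
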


\begin{proof}
For $i\,=\, 1\, ,2$, let
$\overline{{\mathcal P}{\mathcal M}}^{\Lambda_i}$ be the
moduli space of semistable parabolic vector bundles of the
above type with
parabolic weights $\Lambda_i$. The moduli space
$\overline{{\mathcal P}{\mathcal M}}^{\Lambda_i}$ is
normal, and ${\mathcal P}{\mathcal M}^{\Lambda_i}$ is the
smooth locus of $\overline{{\mathcal P}{\mathcal M}}^{\Lambda_i}$.
Thaddeus proved that there are subschemes
${\mathcal S}_1\, \subset\, \overline{{\mathcal P}{\mathcal 
M}}^{\Lambda_1}$ and ${\mathcal S}_2\, \subset\, \overline{{\mathcal 
P}{\mathcal M}}^{\Lambda_2}$, of codimension at--least two, such
that the blow--up of $\overline{{\mathcal P}{\mathcal
M}}^{\Lambda_1}$ along ${\mathcal S}_1$ is isomorphic to the
blow--up of $\overline{{\mathcal P}{\mathcal
M}}^{\Lambda_2}$ along ${\mathcal S}_2$ \cite[Section 7]{Th1}
(see also \cite{Th2}). The lemma follows immediately from
this result.
\end{proof}

\section{One parabolic point and small parabolic weights} 
\label{smallweight}

In this section we assume the following:
\begin{itemize}
\item there is only one parabolic point, so $\ell\, =\, 1$, and

\item all the parabolic weights are sufficiently small
(smaller than $1/n^2$).
\end{itemize}
If $a_1\, =\, 1$ (see \eqref{n}), then ${\mathcal P}{\mathcal 
M}^\alpha_s$ coincides with the moduli space of stable vector
bundles of rank $n$ and determinant $L$, and the
Brauer group of it is already computed in
\cite{BBGN}. Hence we will assume that $a_1\, >\, 1$.

Let $\overline{\mathcal N}$ denote the moduli space of semistable
vector bundles $E$ of rank $n$ over $X$ with $\det(E)\, :=\,
\bigwedge^n E\, =\, L$. Let
\begin{equation}\label{s}
{\mathcal N}\, \subset\, \overline{\mathcal N}
\end{equation}
be the nonempty
Zariski open subset that parametrizes the stable ones.

Since the parabolic weights are sufficiently small we know the 
following:
\begin{itemize}
\item For a stable parabolic bundle, the underlying vector bundle 
is semistable.

\item For any quasi--parabolic structure on a stable vector
bundle of rank $n$ and determinant $L$, the corresponding
parabolic vector bundle is stable.
\end{itemize}
Therefore, we have a forgetful morphism
\begin{equation}\label{pi}
\pi_0\, :\, {\mathcal P}{\mathcal M}^\alpha_s \,\longrightarrow\,
\overline{\mathcal N}
\end{equation}
that sends any stable parabolic bundle to its underlying vector
bundle. Let
\begin{equation}\label{p2}
M\, :=\, \pi^{-1}_0({\mathcal N})\, \subset\,
{\mathcal P}{\mathcal M}^\alpha_s
\end{equation}
be the inverse image, where ${\mathcal N}$ is defined in
\eqref{s}. Note that $M$ is a Zariski open dense subset of
${\mathcal P}{\mathcal M}^\alpha_s$. Let
\begin{equation}\label{p3}
\pi\, :=\, \pi_0\vert_M \, :\, M\,\longrightarrow\,
{\mathcal N}
\end{equation}
be the restriction of $\pi_0$ constructed in \eqref{pi}.

Let
\begin{equation}\label{P}
P\, \subset\, \text{SL}(n, {\mathbb C})
\end{equation}
be the parabolic subgroup
that preserves a fixed filtration of subspaces
\begin{equation}\label{P0}
{\mathbb C}^n \,= \, C_{1} \, \supsetneq\, \cdots\,
\, \supsetneq\, C_{a_1}
\end{equation}
such that $\dim C_j\, =\, \sum_{k=j}^{a_1} r_{1,k}$
(see \eqref{fi.}).
We noted earlier that
any quasi--parabolic structure on a stable vector
bundle of rank $n$ and determinant $L$ lies
in ${\mathcal P}{\mathcal M}^\alpha_s$.
Therefore, the projection $\pi$ in \eqref{p3}
defines a fiber bundle over ${\mathcal N}$ with fiber
$\text{SL}(n, {\mathbb C})/P$, where $P$ is the subgroup
in \eqref{P}.

Consider the moduli space ${\mathcal N}$ in \eqref{s}.
There is a universal projective bundle ${\mathbb P}^0$
on $X\times {\mathcal N}$ \cite[p. 6,
Theorem 2.7]{BBNN}. For the very general point
$E\, \in\, {\mathcal N}$, the projective bundle
$P(E)$ does not admit any nontrivial
automorphism. Hence ${\mathbb P}^0$ is unique.
Fix a point $x_0\, \in\, X$. Let
\begin{equation}\label{i1.}
{\mathbb P}^0_{x_0}\, \longrightarrow\, {\mathcal N}
\end{equation}
be the projective bundle obtained by restricting
${\mathbb P}^0$ to $\{x_0\}\times {\mathcal N}$.
Let
\begin{equation}\label{is.}
{\mathbb P}'\, :=\, \pi^*{\mathbb P}^0_{x_0}
\end{equation}
be the projective bundle over $M$, where $\pi$
is the projection in \eqref{p3}.

Define
\begin{equation}\label{m}
m\, :=\, {\rm g.c.d.}(d\, , n\, , r_{1,1}\, , \cdots\, ,
r_{1,a_1})\, ,
\end{equation}
where $r_{1,j}$ are as in \eqref{n} and
$d\, =\, \text{degree}(L)$.

\begin{lemma}\label{lemma1}
The Brauer group ${\rm Br}(M)$ of the smooth quasi--projective
variety $M$ in \eqref{p2} is the cyclic group 
${\mathbb Z}/m{\mathbb Z}$,
where $m$ is defined in \eqref{m}. The group
${\rm Br}(M)$ is generated by the 
the Brauer class of the Brauer--Severi variety ${\mathbb P}'$
constructed in \eqref{is.}.
\end{lemma}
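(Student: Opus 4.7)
The plan is to compute ${\rm Br}(M)$ by analyzing the fibration $\pi\,:\,M\,\to\,\mathcal{N}$, whose fiber is the flag variety ${\rm SL}(n,{\mathbb C})/P$, combining the computation of ${\rm Br}(\mathcal{N})$ from \cite{BBGN} with a Leray spectral sequence argument.

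First I would recall from \cite{BBGN} that ${\rm Br}(\mathcal{N})$ is cyclic of order $\gcd(n,d)$, generated by $\alpha\,:=\,[{\mathbb P}^0_{x_0}]$. Since this group is finite while $X$ is connected, the assignment $x\,\mapsto\,[{\mathbb P}^0_x]\,\in\,{\rm Br}(\mathcal{N})$ is locally constant and hence constant, so $[{\mathbb P}^0_{p_1}]\,=\,\alpha$ as well. Next I would observe that on $M$ the pulled back projective bundle $\pi^*{\mathbb P}^0_{p_1}$ carries a tautological filtration by projective subbundles whose successive ``quotients'' ${\mathbb Q}_j$ are Brauer--Severi varieties of relative dimension $r_{1,j}-1$, each with Brauer class $\pi^*\alpha$ (passing to sub- or quotient bundles of a twisted vector bundle preserves the twist class). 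The relation $r_{1,j}\cdot[{\mathbb Q}_j]\,=\,0$ in ${\rm Br}(M)$ then forces $r_{1,j}\cdot\pi^*\alpha\,=\,0$ for every $j$, and combined with $\gcd(n,d)\cdot\alpha\,=\,0$ this shows that the order of $\pi^*\alpha$ divides $m$.

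For the reverse divisibility and for the identification of the full Brauer group I would apply the Leray spectral sequence
$$
E_2^{p,q}\,=\,H^p(\mathcal{N}_\et,\,R^q\pi_*{\mathbb G}_m)\,\Longrightarrow\,H^{p+q}(M_\et,\,{\mathbb G}_m).
$$
The fiber ${\rm SL}(n,{\mathbb C})/P$ is smooth, projective, rational and simply connected, so its Brauer group vanishes, and by proper base change $R^2\pi_*{\mathbb G}_m\,=\,0$. The relative Picard sheaf $R^1\pi_*{\mathbb G}_m$ has stalk ${\rm Pic}({\rm SL}(n,{\mathbb C})/P)\,\cong\,{\mathbb Z}^{a_1-1}$, and since $\mathcal{N}$ is simply connected the monodromy is trivial and the sheaf is constant, so $H^1(\mathcal{N},\,R^1\pi_*{\mathbb G}_m)\,=\,0$. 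The spectral sequence therefore collapses to an isomorphism
$$
{\rm Br}(M)\,\cong\,{\rm Br}(\mathcal{N})\,\big/\,{\rm Image}(d_2),
$$
where $d_2\,:\,{\mathbb Z}^{a_1-1}\,\to\,{\rm Br}(\mathcal{N})$ is the differential on the $E_2$-page.

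The last step is to identify $d_2$. A basis of $H^0(\mathcal{N},\,R^1\pi_*{\mathbb G}_m)$ is given by the fiberwise classes $[\det Q_j]$ of the tautological quotients, and a cocycle calculation identifies the obstruction $d_2([\det Q_j])$ with the Brauer class of the determinant of a twisted rank-$r_{1,j}$ bundle of class $\alpha$, namely $r_{1,j}\alpha$. Hence ${\rm Image}(d_2)\,=\,\gcd(r_{1,1},\ldots,r_{1,a_1})\cdot\alpha$, so the quotient is cyclic of order $m$, generated by the image of $\alpha$, which is $\pi^*\alpha\,=\,[{\mathbb P}']$. I expect the main obstacle to be the precise verification that $d_2([\det Q_j])\,=\,r_{1,j}\alpha$, which requires a careful cocycle-level argument (or equivalently the twisted-sheaf/Brauer-class dictionary); once this is in hand the rest of the argument is essentially formal.
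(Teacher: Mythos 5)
Your proposal follows essentially the same route as the paper: the Leray spectral sequence for the forgetful fibration $\pi\,:\,M\,\to\,\mathcal N$, the vanishing of the relevant part of $R^2\pi_*\mathbb G_m$, the constancy of $R^1\pi_*\mathbb G_m$ over the simply connected base $\mathcal N$, the computation of $\mathrm{Br}(\mathcal N)$ from \cite{BBGN}, and the identification of the $d_2$ differential on the tautological line bundles (you use the successive quotients of ranks $r_{1,j}$, the paper uses the subspaces of dimensions $c_j$; the two generate the same subgroup of $\mathrm{Br}(\mathcal N)$, so the quotient is the same $\mathbb Z/m\mathbb Z$). One repair is needed: the assertion that $R^2\pi_*\mathbb G_m\,=\,0$ ``by proper base change'' is not legitimate, since proper base change in \'etale cohomology applies only to torsion sheaves, not to $\mathbb G_m$. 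The paper gets around this by noting that only the torsion of $R^2\pi_*\mathbb G_m$ matters for the Brauer group, and proving $(R^2\pi_*\mathbb G_m)_{\rm torsion}\,=\,0$ via the Kummer sequence: proper base change applied to $\mu_\delta$ identifies the stalks of $R^2\pi_*\mu_\delta$ with $H^2(\pi^{-1}(z),\mu_\delta)$, and the surjection $\mathrm{Pic}(\pi^{-1}(z))\to H^2(\pi^{-1}(z),\mu_\delta)$ (coming from $H^2((\mathrm{SL}(n,\mathbb C)/P)_{\acute{e}t},\mathbb G_m)=0$ for the rational fiber) then kills the $\delta$--torsion of $R^2\pi_*\mathbb G_m$. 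With that substitution, and the cited result of Artin for the value of $d_2$ on the tautological classes, your argument goes through.
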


\begin{proof}
Applying the Leray spectral sequence (see \cite[p. 89, Theorem 
1.18]{Mil}) to the projection $\pi$ in \eqref{p3},
\begin{equation}\label{eq0}
E^{p,q}_2\,:=\, H^p({\mathcal N}_\et,\, R^q\pi_*\mathbb G_m)
\implies H^{p+q}(M_\et,\, \mathbb G_m)\, ,
\end{equation}
where $\mathbb G_m$ denotes the sheaf of regular invertible
function. Consequently, we have a long exact sequence 
\begin{equation}\label{eqn1}
\longrightarrow \,E^{0,1}_2 \,\stackrel{\theta}{\longrightarrow}\,
E^{2,0}_2\,\stackrel{\theta_1}{\longrightarrow}\, H^{2}(M_\et,\,
\mathbb G_m)\,\longrightarrow\, E^{0,2}_2 \, \longrightarrow\, .
\end{equation}

We will investigate the first term in \eqref{eqn1}.

The variety $\mathcal N$ is simply connected \cite[p. 266,
Proposition 1.2(b)]{BBGN}, and
$$
\text{Pic}(\text{SL}(n, {\mathbb C})/P)\, =\,
{\mathbb Z}^{\oplus (a_1-1)}
$$
(see \eqref{P}),
where $a_1$ is the integer in \eqref{P0}. Hence
$$
R^1\pi_*\mathbb G_m\, \longrightarrow\, {\mathcal N}
$$
is the constant sheaf with stalk isomorphic to
${\mathbb Z}^{\oplus (a_1-1)}$. From the connectedness of 
$\text{SL}(n, {\mathbb C})$ it follows that the automorphisms
of $\text{SL}(n, {\mathbb C})/P$ given by the left
translation action
of $\text{SL}(n, {\mathbb C})$ on it act trivially on
$\text{Pic}(\text{SL}(n, {\mathbb C})/P)$. Consequently,
the direct image
$$
R^1\pi_*\mathbb G_m\, \longrightarrow\, {\mathcal N}
$$
is canonically identified with the constant sheaf with
stalk $\text{Pic}(\text{SL}(n, {\mathbb C})/P)$.

The homogeneous variety $\text{SL}(n, {\mathbb C})/P$
(see \eqref{P}) is smooth complete and rational \cite{BR}. Hence
\begin{equation}\label{gp}
H^2((\text{SL}(n, {\mathbb C})/P)_\et,\, \mathbb G_m)\, =\, 0\, .
\end{equation}
We will show that
\begin{equation}\label{e1}
(R^2\pi_*\mathbb G_m)_{\rm torsion}\,=\, 0\, .
\end{equation}

Take any integer $\delta\, \geq\, 2$. From the Kummer sequence,
\begin{equation}\label{e2}
R^1\pi_*\mathbb G_m\, \longrightarrow\, R^2\pi_* \mu_\delta
\, \longrightarrow\, (R^2\pi_*\mathbb G_m)[\delta]\, \longrightarrow\, 
0\, ,
\end{equation}
where $(R^2\pi_*\mathbb G_m)[\delta]\, \subset\, R^2\pi_*\mathbb G_m$
is the subgroup generated by $\delta$--torsion elements. Take
any geometric point $z\, \longrightarrow\, {\mathcal N}$,
and fix a Henselization of ${\mathcal N}$ at $z$. Now pull
back \eqref{e2} to $z$. From the above description of
$R^1\pi_*\mathbb G_m$ as the constant sheaf with stalk isomorphic
to ${\mathbb Z}^{\oplus (a_1-1)}$, we know that
$(R^1\pi_*\mathbb G_m)_z \,=\, \text{Pic}(\pi^{-1}(z))$. From the
proper base change theorem \cite[p. 223, Theorem 2.1]{Mil},
$$
(R^2\pi_* \mu_\delta)_z\, =\, H^2(\pi^{-1}(z),\, \mu_\delta)\, .
$$
The homomorphism $\text{Pic}(\pi^{-1}(z))\, \longrightarrow\,
H^2(\pi^{-1}(z),\, \mu_\delta)$ is surjective due to \eqref{gp}
in the long exact sequence of cohomologies for the Kummer sequence.
Hence $((R^2\pi_*\mathbb G_m)[\delta])_z\,=\, 0$ for all
$z$. This implies that \eqref{e1} holds.

Note that $R^0\pi_*{\mathbb G}_m\, =\, {\mathbb G}_m$.
Hence $E^{2,0}_2\, =\, H^{2}({\mathcal N}_\et,\,
\mathbb G_m)$ (see \eqref{eq0}).
The cohomological Brauer group $H^{2}({\mathcal N}_\et,\,
\mathbb G_m)_{\rm torsion}$ is generated by the Brauer class of the
projective bundle ${\mathbb P}^0_{x_0}$ in \eqref{i1.}
\cite[p. 266, Proposition 1.2(a)]{BBGN}. From the
construction of ${\mathbb P}'$ in \eqref{is.} it follows
that the Brauer class of ${\mathbb P}'$ coincides with
$\theta_1(\beta)$, where $\theta_1$ is the homomorphism in
\eqref{eqn1}, and $\beta\, \in\, H^{2}({\mathcal N}_\et,\,
\mathbb G_m)$ is the Brauer class of ${\mathbb P}^0_{x_0}$.
Therefore, from \eqref{eqn1} and
\eqref{e1} we conclude that the Brauer class of
${\mathbb P}'$ generates $H^{2}(M_\et,\, \mathbb G_m)_{\rm torsion}$.

Therefore, from \eqref{eqn1} and \eqref{e1},
\begin{equation}\label{eqn2}
\longrightarrow \,\text{Pic}(\text{SL}(n, {\mathbb C})/P) 
\,\stackrel{\theta}{\longrightarrow}\, 
\text{Br}({\mathcal N})\,\stackrel{\theta_1}{\longrightarrow}
\,\text{Br}(M) \,\longrightarrow\, 0\, .
\end{equation}

We will describe generators of $\text{Pic}(\text{SL}(n, {\mathbb 
C})/P)$. For each $j\, \in\, [2\, , a_1]$, define
\begin{equation}\label{cj}
c_j\, :=\, \sum_{k=j}^{a_1} r_{1,k}
\end{equation}
(see \eqref{n}). Let
\begin{equation}\label{fj}
f_j\, :\, \text{SL}(n, {\mathbb C})/P\,
\longrightarrow\, P(\bigwedge\nolimits^{c_j} {\mathbb C}^n)
\end{equation}
be the morphism that sends any filtration
$$
{\mathbb C}^n \,= \, V_{1} \, \supsetneq\, \cdots\,
\, \supsetneq\, V_{a_1}
$$
to the line in $\bigwedge^{c_j} {\mathbb C}^n$ defined
by $\bigwedge^{c_j} V_j$. Let
\begin{equation}\label{f2}
\zeta_j\, :=\, f^*_j{\mathcal O}_{P(\wedge^{c_j} {\mathbb C}^n)}
(1)\, \longrightarrow\, \text{SL}(n, {\mathbb C})/P
\end{equation}
be the line bundle, where $f_j$ is the morphism in \eqref{fj}.
It is known that the homomorphism
\begin{equation}\label{f3}
\eta\, :\, {\mathbb Z}^{\oplus (a_1-1)}\, \longrightarrow\,
\text{Pic}(\text{SL}(n, {\mathbb C})/P)
\end{equation}
defined by
$$
(z_1\, , \cdots\, ,z_{a_1-1})\, \longmapsto\,
\bigotimes_{j=1}^{a_1-1}\zeta^{\otimes z_j}_{j+1}
$$
is an isomorphism.

Let $\beta\, \in\, H^{2}({\mathcal N}_\et,\, \mathbb G_m)$ be
the Brauer class of the projective bundle ${\mathbb P}^0_{x_0}$
defined in \eqref{i1.}. The order of the generator
$\beta$ of $H^{2}({\mathcal N}_\et,\, \mathbb G_m)$
is $\text{g.c.d.}(n,d)$ \cite[p. 267, Theorem 1.8]{BBGN}.
Consider the homomorphism $\theta$ in \eqref{eqn2}.
For each $j\,\in [2\, ,a_1]$, we have
\begin{equation}\label{c.}
\theta(\zeta_j) \, =\, c_j\cdot\beta\, ,
\end{equation}
where $c_j$ is defined in \eqref{cj} \cite[p. 203, Proposition
4.4(ii)]{Art} (see also \cite[p. 267, Lemma 1.5]{BBGN}). From
\eqref{c.} it follows that the order of the generator
$\theta_1(\beta)\, \in \,\text{Br}(M)$ is
\begin{equation}\label{dm}
m'\, :=\, \text{g.c.d.}(\text{g.c.d.}(n,d)\, ,c_2\, , \cdots \, ,
c_{a_1})\, ,
\end{equation}
where $\theta_1$ is the homomorphism in \eqref{eqn2}.

Using \eqref{n} it follows that $m\, =\, m'$,
where $m$ is defined in \eqref{m}, and $m'$ is defined in
\eqref{dm}. Therefore, the order of the
generator $\theta_1(\beta)\, \in \,\text{Br}(M)$ is $m$.
We noted earlier that the Brauer class of ${\mathbb P}'$
(defined in \eqref{is.}) coincides with $\theta_1(\beta)$.
This completes the proof of the lemma.
\end{proof}

\begin{lemma}\label{lem2a}
Consider the Zariski open subset $M\, \subset\,
{\mathcal P}{\mathcal M}^\alpha_s$ defined in \eqref{p2}. 
The codimension of its complement
$$
{\mathcal P}{\mathcal M}^\alpha_s\setminus M\, \subset\,
{\mathcal P}{\mathcal M}^\alpha_s
$$
is at least two.
\end{lemma}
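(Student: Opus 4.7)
The plan is to identify the complement $\mathcal{PM}^\alpha_s \setminus M$ as the preimage under $\pi_0$ of the strictly semistable locus $\overline{\mathcal N}\setminus \mathcal N$, and then to bound its dimension via a fiber--dimension estimate combined with the classical codimension bound on the strictly semistable locus in $\overline{\mathcal N}$. First, by the definition of $M$ in \eqref{p2}, one has $\mathcal{PM}^\alpha_s \setminus M = \pi_0^{-1}(\overline{\mathcal N}\setminus \mathcal N)$. For any $E \in \overline{\mathcal N}$, the fiber $\pi_0^{-1}(E)$ consists of those quasi--parabolic structures on $E_{p_1}$ of the prescribed type that yield a stable parabolic bundle, so in particular $\pi_0^{-1}(E)$ is contained in the flag variety $\text{SL}(n,\mathbb C)/P$ of all such filtrations. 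Every nonempty fiber therefore has dimension at most $\dim \text{SL}(n,\mathbb C)/P$, and the fiber--dimension inequality yields
$$
\dim \pi_0^{-1}(\overline{\mathcal N}\setminus \mathcal N) \,\leq\, \dim(\overline{\mathcal N}\setminus \mathcal N) + \dim \text{SL}(n,\mathbb C)/P.
$$

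Next, since $\pi : M \to \mathcal N$ is a fiber bundle with fiber $\text{SL}(n,\mathbb C)/P$ and $M$ is open and dense in $\mathcal{PM}^\alpha_s$, the ambient dimension satisfies $\dim \mathcal{PM}^\alpha_s = \dim \overline{\mathcal N} + \dim \text{SL}(n,\mathbb C)/P$. Combining this with the previous display gives
$$
\text{codim}_{\mathcal{PM}^\alpha_s}(\mathcal{PM}^\alpha_s\setminus M) \,\geq\, \text{codim}_{\overline{\mathcal N}}(\overline{\mathcal N}\setminus \mathcal N).
$$
To finish, I would invoke the classical fact that under the standing hypotheses ($g(X) \geq 2$ and $n \geq 2$, with $n \geq 3$ when $g(X) = 2$), the strictly semistable locus $\overline{\mathcal N}\setminus \mathcal N$ has codimension at least two in $\overline{\mathcal N}$; this can be extracted from standard references on the moduli of vector bundles on curves. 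The exclusion $(g(X),n) \neq (2,2)$ is sharp at this step, since in rank two on a genus two curve the strictly semistable locus would have codimension only $2g(X)-3 = 1$, and the lemma would fail.

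The main obstacle is justifying the fiber--dimension bound uniformly, especially over strictly semistable $E$ where $\pi_0$ is no longer a fiber bundle: some filtrations on $E_{p_1}$ may fail to yield a stable parabolic bundle, so the fiber can be a proper subvariety of the flag variety, or even empty. The key observation is that the ambient flag variety $\text{SL}(n,\mathbb C)/P$ nonetheless provides a uniform upper bound on the fiber dimension, which together with the codimension estimate for the strictly semistable locus in $\overline{\mathcal N}$ is enough to conclude.
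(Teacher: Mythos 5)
Your reduction has a genuine gap at the key step. You identify $\mathcal{P}\mathcal{M}^\alpha_s\setminus M$ with $\pi_0^{-1}(\overline{\mathcal N}\setminus\mathcal N)$ (correct) and then assert that every fiber of $\pi_0$ is contained in the flag variety $\mathrm{SL}(n,\mathbb C)/P$, hence has dimension at most $\dim \mathrm{SL}(n,\mathbb C)/P$. That is false over $\overline{\mathcal N}\setminus\mathcal N$: points of $\overline{\mathcal N}$ there are \emph{S-equivalence classes}, not isomorphism classes of bundles, so the fiber of $\pi_0$ over such a point consists of stable parabolic structures on \emph{every} semistable bundle $E'$ whose associated graded is the given polystable bundle. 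These $E'$ form a positive-dimensional family (e.g.\ for a two-step class $F_1\oplus F_2$ one gets at least a $\mathbb P(\mathrm{Ext}^1(F_2,F_1))$ worth of non-split extensions, of dimension $\geq n_1n_2(g-1)-1$), so the fiber is a flag-variety bundle over that family, not a subvariety of a single flag variety. Consequently the inequality $\operatorname{codim}(\mathcal{P}\mathcal{M}^\alpha_s\setminus M)\geq \operatorname{codim}_{\overline{\mathcal N}}(\overline{\mathcal N}\setminus\mathcal N)$ does not follow as stated. The obstacle you flag at the end (fibers possibly being \emph{smaller} than the flag variety) is harmless for an upper bound; the real problem is that they can be \emph{larger}.

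The conclusion can still be reached along your lines, but only after a finer count: stratify by Jordan--H\"older type, and for each stratum add the dimension of the moduli of the graded pieces, the extension spaces, and the flag variety. For a two-step type with ranks $n_1,n_2$ the polystable locus has codimension $2n_1n_2(g-1)-1$ in $\overline{\mathcal N}$ while the extra fiber dimension is about $n_1n_2(g-1)-1$, leaving codimension $\geq n_1n_2(g-1)$; this is $\geq 2$ exactly under the standing hypotheses ($g\geq 3$, or $g=2$ with $n\geq 3$), so the margin is tight and the bookkeeping (including longer filtrations and nonzero $\mathrm{Hom}$ between factors) must be done carefully. The paper sidesteps all of this by working on the Quot-scheme parameter space $R^{\mathrm{ss}}$ from the GIT construction, where points are honest bundles and the flag-bundle structure \emph{is} uniform; the complement is then the preimage of $R^{\mathrm{ss}}\setminus R^{\mathrm s}$, whose codimension-two bound is quoted from Bhosle \cite[Proposition 1.2(3)]{Bh}. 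Either repeat that reduction to the parameter space, or carry out the stratified dimension count explicitly; as written, your argument does not close.
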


\begin{proof}
We use the notation of \cite[p. 246, Proposition 1.2]{Bh}.
The codimension of ${\mathcal P}{\mathcal M}^\alpha_s\setminus M$
is bounded by that of $R^{\rm ss}\setminus R^{\rm s} \, \subset\,
R^{\rm ss}$. Hence the lemma follows from
\cite[p. 246, Proposition 1.2(3)]{Bh}.
\end{proof}

There is a universal projective bundle $\mathbb P$ over $X\times 
{\mathcal P}{\mathcal M}^\alpha_s$. It is universal in the sense
that each parabolic vector bundle $E_*\, \in\, {\mathcal P}{\mathcal 
M}^\alpha_s$, the restriction of $\mathbb P$ to $X\times\{E_*\}$
coincides with the projectivization ${\mathbb P}(E)$ of the
underlying vector bundle. To construct $\mathbb P$, we recall
that ${\mathcal P}{\mathcal M}^\alpha_s$ is constructed
as a geometric invariant theoretic quotient of a variety
$\mathcal R$ (see \cite{MY}, \cite{MS}). There is a canonical
universal projective bundle over $X\times \mathcal R$ admitting
a lift of the action of the group. The projective bundle
$\mathbb P$ is the corresponding quotient. (See also \cite[p. 6,
Theorem 2.7]{BBNN}.) The universal projective bundle $\mathbb P$
is unique.

Let ${\mathbb P}_{x_0}\, \longrightarrow\,
{\mathcal P}{\mathcal M}^\alpha_s$ be the projective bundle
obtained by restricting the above projective bundle to
$\{x_0\}\times {\mathcal P}{\mathcal M}^\alpha_s$, where
$x_0$ is the fixed point of $X$ (see \eqref{i1.}).
The restriction of ${\mathbb P}_{x_0}$ to the open subset
$M$ (see \eqref{p2}) is isomorphic to the projective
bundle ${\mathbb P}'$ defined in \eqref{is.}.

As mentioned in the introduction, any moduli space of stable
parabolic bundles is smooth (the obstruction to smoothness of
a stable point is $H^2$ of the sheaf of parabolic endomorphisms,
and this $H^2$ vanishes because the base is a curve).
Therefore, we conclude that in the special case under
consideration, Theorem \ref{thm0} follows from Lemma
\ref{lemma1} and Lemma \ref{lem2a}.

\section{One parabolic point and arbitrary weights}\label{step3}

We continue with the assumption that
there is exactly one parabolic point. But the earlier
condition on parabolic weights is now removed. As
before, we assume that $a_1\, \geq\, 2$.

As before, fix a rank, a determinant line bundle and
a quasi--parabolic filtration type. Let $\alpha$ 
and $\beta$ be two sets of parabolic weights for this
quasi--parabolic type with $\alpha$ being
sufficiently small. Let ${\mathcal P}{\mathcal M}^{\beta}$
(respectively, ${\mathcal P}{\mathcal M}^{\alpha}$) be the moduli
space of stable parabolic vector bundles with parabolic weights
$\beta$ (respectively, $\alpha$).

In Section \ref{smallweight} we have proved Theorem \ref{thm0}
for ${\mathcal P}{\mathcal M}^{\alpha}_s$. Therefore, from
Lemma \ref{thaddeus} we conclude that Theorem
\ref{thm0} holds also for ${\mathcal P}{\mathcal M}^{\beta}_s$.

\section{Multiple parabolic points}

In this section we drop the assumption in Section
\ref{step3} that there is only one parabolic point.

Fix $\ell$ parabolic points as in \eqref{ppoint}. At each
parabolic point $p_i$, fix the
quasi--parabolic structure of type
$\{r_{i,j}\}_{j=1}^{a_i}$ together with
the parabolic weights
$\alpha\, :=\, \{\alpha_{i,1}\, ,\cdots \, ,\alpha_{i,a_i}\}$
(see \eqref{n} and \eqref{fi.}). 

The proof of Theorem \ref{thm0} for multiple parabolic
points is similar to that for the case of one point.

First assume that all the parabolic weights are
sufficiently small. More precisely, assume that
all the parabolic weights are smaller than
$(n^2\ell)^{-1}$. This condition ensures that the underlying
vector bundle of a stable parabolic vector bundle is semistable.

We use the notation of Section \ref{smallweight}. Let
$\mathcal N \,\subset\, \overline{\mathcal N}$ be the moduli
space of stable vector bundles over $X$ of rank $n$ and
determinant $L$ (see \eqref{s}). Define
\begin{equation}\label{m1}
M\, := \,\pi_0^{-1}({\mathcal N})\, ,
\end{equation}
where $\pi_0\, : \, {\mathcal P}{\mathcal M}^{\alpha}_{s} 
\,\longrightarrow\, \overline{N}$ is the morphism that sends a
parabolic vector bundle to its underlying vector bundle.
Define
\begin{equation}\label{pi2}
\pi\, :=\, \pi_0\vert_M\, :\, M\, :=\,
\pi^{-1}_0({\mathcal N})\, \longrightarrow\, {\mathcal N}
\end{equation}
(see \eqref{p3}).

For each $i\, \in\, [1\, ,\ell]$, let
\begin{equation}\label{Pm}
P_i\, \subset\, \text{SL}(n, {\mathbb C})
\end{equation}
be the parabolic subgroup that preserves a fixed filtration
of subspaces
$$
{\mathbb C}^n \,= \, C_{1}^{i} \, \supsetneq\, \cdots\,
\, \supsetneq\, C_{a_i}^{i}
$$
such that $\dim C_{j}^{i}\, =\, \sum_{k=j}^{a_i} r_{i,k}$.

Since the parabolic weights are sufficiently small, any
quasi--parabolic structure on a stable vector bundle of
rank $n$ and determinant $L$ is parabolic stable.
Consequently, the projection $\pi$ in \eqref{pi2}
makes $M$ a fiber bundle over ${\mathcal N}$ with fiber
\begin{equation}\label{G}
{\mathbb F}\, :=\, \prod_{i=1}^\ell\text{SL}(n,
{\mathbb C})/P_i\, .
\end{equation}

Our first aim is to determine the Brauer group
$\text{Br}(M)$ of $M$ using the fibration $\pi$.

Consider the long exact sequence constructed as in
\eqref{eqn1} for the projection $\pi$ in \eqref{pi2}.
Since $\mathcal N$ is simply connected
\cite[p. 266, Proposition 1.2(b)]{BBGN},
the direct image
$$
R^1\pi_*{\mathbb G}_m \longrightarrow \mathcal N
$$ 
is canonically identified with the constant sheaf with stalk 
\begin{equation}\label{n0}
\text{Pic}({\mathbb F})\, =\, {\mathbb Z}^{N_0}\, ,
\end{equation}
where $N_0\, =\, \sum_{i=1}^\ell (a_i-1)$.

Since $\mathbb F$ in \eqref{G} is a smooth rational projective
variety, we have
\begin{equation}\label{lastterm}
H^2({\mathbb F}_{\et},\, {\mathbb G}_m) \, = \, 0\, . 
\end{equation}
In view of this and the above description of $R^1\pi_*{\mathbb G}_m$,
we may repeat the argument in the proof of Lemma \ref{lemma1}
for \eqref{e1} to conclude that
\begin{equation}\label{ac1}
(R^2\pi_*\mathbb G_m)_{\rm torsion}\,=\, 0\, .
\end{equation}
Now from \eqref{eqn1} we conclude that the Brauer class of
${\mathbb P}'$ generates $H^{2}(M_\et,\, {\mathbb G}_m)$,
where ${\mathbb P}'$ is constructed as in \eqref{is.}.

Therefore, from \eqref{eqn1} we have 
\begin{equation}\label{eqn22}
\longrightarrow\, \text{Pic}({\mathbb F})\, 
\stackrel{\theta}{\longrightarrow} \,
\text{Br}({\mathcal N}) \,\stackrel{\theta_1}{\longrightarrow}
\,\text{Br}(M) \,\longrightarrow\, 0\, .
\end{equation}

For each $i \in [1\, ,\ell]$, consider $P_i$ defined in
\eqref{Pm}. Let $\{\zeta_j^i\}_{2 \le j \le a_i}$ be the
generators of $\text{Pic}(\text{SL} (n,\mathbb C)/P_i)$ (see
\eqref{f2} for the description of these generators), and let
\begin{equation}\label{f33}
\eta :\, {\mathbb Z}^{\oplus N_0}\, 
\longrightarrow\,
\bigoplus_{i=1}^{\ell}\text{Pic}(\text{SL}(n, {\mathbb C})/P_i)
\, =\, \text{Pic}({\mathbb F})
\end{equation}
be the isomorphism defined by these generators (see \eqref{f3}
and \eqref{n0}).

For each $i \,\in \,[1\, ,\ell]$ and $j \,\in \, [2\, ,a_i]$, 
define 
$$
c_j^i\,:=\, \sum_{k=j}^{a_i}r_{i,k}\, .
$$

The homomorphism $\theta$ in \eqref{eqn22} satisfies the
following:
\begin{equation}\label{theta}
\theta(\zeta^i_j) \,= \, c^i_j \cdot\beta\, ,
\end{equation}
where $\beta\, \in\,\text{Br}({\mathcal N})$ is the Brauer
class of the Brauer--Severi variety ${\mathbb P}^0_{x_0}\,
\longrightarrow\, {\mathcal N}$ constructed as in \eqref{i1.}.
As before, define
\begin{equation}\label{theta2}
{\mathbb P}'\, :=\, \pi^*{\mathbb P}^0_{x_0}\, .
\end{equation}

The order of $\beta$ is $\text{g.c.d.}(n,d)$
\cite[p. 267, Theorem 1.8]{BBGN}.
Hence from \eqref{theta} we conclude that the order of
$\theta_1(\beta)$ is
\begin{equation}\label{mp}
m'\, :=\, \text{g.c.d.}(\text{g.c.d.}(n,d), \, c^1_2 \, ,
\cdots\, , c^1_{a_1}\, , \cdots \, , c^i_j \, , \cdots \, ,
c^\ell_{2}\, , \cdots \, , c^\ell_{a_\ell})\, ,
\end{equation}
where $\theta_1$ is the homomorphism in \eqref{eqn22}.

Consider $m$ defined in Theorem \ref{thm0}.
Using \eqref{n} it follows that $m$ coincides with
$m'$ defined in \eqref{mp}. Therefore, we have the following
proposition:

\begin{proposition}\label{prop2}
For the variety $M$ defined in \eqref{m1},
$$
{\rm Br}(M)\, =\, {\mathbb Z}/m{\mathbb Z}\, ,
$$
and ${\rm Br}(M)$ is generated by the Brauer class of the
Brauer--Severi variety ${\mathbb P}'\,\longrightarrow\, M$
constructed in \eqref{theta2}.
\end{proposition}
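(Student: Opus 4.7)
The plan is to apply the Leray spectral sequence for $\pi\colon M \to \mathcal{N}$ with fiber $\mathbb{F}=\prod_{i=1}^{\ell}\text{SL}(n,\mathbb{C})/P_i$, in direct parallel to the proof of Lemma \ref{lemma1}. The simple connectivity of $\mathcal{N}$, together with the connectedness of $\text{SL}(n,\mathbb{C})$ (which trivialises the monodromy on $\text{Pic}(\mathbb{F})$), gives that $R^1\pi_*\mathbb{G}_m$ is canonically the constant sheaf with stalk $\text{Pic}(\mathbb{F})$. Combining the vanishing $H^2(\mathbb{F}_\et,\mathbb{G}_m)=0$ from \eqref{lastterm} with the Kummer sequence and proper base change forces $(R^2\pi_*\mathbb{G}_m)_{\text{torsion}}=0$ stalkwise, exactly as in Lemma \ref{lemma1}. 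The resulting five-term exact sequence \eqref{eqn22} presents $\text{Br}(M)$ as the cokernel of the transgression $\theta\colon\text{Pic}(\mathbb{F})\to\text{Br}(\mathcal{N})$, and identifies the Brauer class of $\mathbb{P}'=\pi^*\mathbb{P}^0_{x_0}$ with $\theta_1(\beta)$ for $\beta=[\mathbb{P}^0_{x_0}]$.

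The next step is to compute $\theta$ explicitly. By \cite[p.\,267, Theorem 1.8]{BBGN} the group $\text{Br}(\mathcal{N})$ is cyclic of order $\text{g.c.d.}(n,d)$ with generator $\beta$, and the Artin-type transgression formula \eqref{theta} (following \cite[p.\,203, Proposition 4.4(ii)]{Art} and \cite[p.\,267, Lemma 1.5]{BBGN}) yields $\theta(\zeta_j^i)=c_j^i\cdot\beta$ on the standard line-bundle generators of $\text{Pic}(\mathbb{F})$ from \eqref{f33}. Hence the image of $\theta$ is the subgroup of $\langle\beta\rangle$ generated by $\text{g.c.d.}(c_j^i)\cdot\beta$, and consequently the order of $\theta_1(\beta)\in\text{Br}(M)$ is the integer $m'$ of \eqref{mp}.

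Finally the identification $m'=m$ is a pure gcd manipulation: for each $i$ one has $c_1^i=n$, $c_{a_i}^i=r_{i,a_i}$, and $r_{i,j}=c_j^i-c_{j+1}^i$ for $1\le j<a_i$, so the sets $\{n,c_2^i,\ldots,c_{a_i}^i\}$ and $\{n,r_{i,1},\ldots,r_{i,a_i}\}$ generate the same ideal of $\mathbb{Z}$; combining across $i$ and incorporating $d$ yields $m'=m$. This together with the surjectivity in \eqref{eqn22} identifies $\text{Br}(M)\cong\mathbb{Z}/m\mathbb{Z}$ generated by $[\mathbb{P}']$. I do not foresee a genuine obstacle: every analytic input (Leray, Kummer, proper base change, transgression formula) is either standard or already in place from Lemma \ref{lemma1}, and the product structure of $\mathbb{F}$ makes the transgression block-diagonal so that the one-factor argument extends verbatim; the only point requiring care is to run the torsion-vanishing argument for $R^2\pi_*\mathbb{G}_m$ stalkwise rather than globally, precisely as was done in Lemma \ref{lemma1}.
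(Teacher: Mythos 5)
Your proposal is correct and follows essentially the same route as the paper: the Leray spectral sequence for $\pi$ with the product fiber $\mathbb F$, simple connectivity of $\mathcal N$ plus vanishing of $H^2(\mathbb F_{\acute{e}tale},\mathbb G_m)$ and the stalkwise Kummer/proper-base-change argument to kill $(R^2\pi_*\mathbb G_m)_{\rm torsion}$, the transgression formula $\theta(\zeta^i_j)=c^i_j\cdot\beta$, and the gcd identity $m'=m$. No discrepancies with the paper's argument.
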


\begin{lemma}\label{lem2b}
Consider the Zariski open subset $M\, \subset\,
{\mathcal P}{\mathcal M}^\alpha_s$ defined in \eqref{m1}.
The codimension of the complement
$$
{\mathcal P}{\mathcal M}^\alpha_s\setminus M\, \subset\,
{\mathcal P}{\mathcal M}^\alpha_s
$$
is at least two.
\end{lemma}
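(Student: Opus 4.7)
The plan is to repeat essentially verbatim the argument used for Lemma \ref{lem2a}; the codimension estimate involved depends only on the parameter scheme underlying the GIT construction of the moduli space and is insensitive to the number of parabolic points.

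First I would recall that $\mathcal{PM}^\alpha_s$ is built as a geometric invariant theoretic quotient of an open subset of a parameter scheme obtained from a Quot scheme together with a smooth flag-variety factor at each parabolic point; see \cite{MY}, \cite{MS}. Since all parabolic weights are smaller than $(n^2\ell)^{-1}$, the underlying vector bundle of any stable parabolic bundle is semistable, so $\mathcal{PM}^\alpha_s \setminus M$ consists precisely of stable parabolic bundles whose underlying vector bundle is strictly semistable. Consequently its codimension inside $\mathcal{PM}^\alpha_s$ is bounded above by the codimension of $R^{\rm ss}\setminus R^{\rm s}$ inside $R^{\rm ss}$, in the notation of \cite[p.\ 246, Proposition 1.2]{Bh}, where $R^{\rm ss}$ (respectively, $R^{\rm s}$) denotes the locus in the parameter scheme whose underlying vector bundle is semistable (respectively, stable).

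The key step will then be to invoke \cite[p.\ 246, Proposition 1.2(3)]{Bh}, which gives exactly the desired codimension bound of at least two for $R^{\rm ss}\setminus R^{\rm s}$; Bhosle's formulation applies uniformly for any $\ell\geq 1$ parabolic points, so this is precisely the same input used in Lemma \ref{lem2a} and no new argument is needed. The only sanity check---not really a genuine obstacle---is to verify that the ``underlying vector bundle is stable'' locus upstairs is invariant under the reductive group action, so that codimensions descend faithfully to the quotient; this is immediate, since the condition is intrinsic to the isomorphism class of the underlying bundle.
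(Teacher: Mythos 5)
Your proposal is exactly the paper's argument: the paper's proof of this lemma literally reads that the proof of Lemma \ref{lem2a} goes through without any change, i.e.\ one bounds the codimension of ${\mathcal P}{\mathcal M}^\alpha_s\setminus M$ by that of $R^{\rm ss}\setminus R^{\rm s}\subset R^{\rm ss}$ and invokes \cite[p.~246, Proposition 1.2(3)]{Bh}, which applies for any number of parabolic points. The only slip is directional: you want the codimension of the complement bounded \emph{below} (not above) by that of $R^{\rm ss}\setminus R^{\rm s}$, since the conclusion to be drawn is that it is at least two.
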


\begin{proof}
The proof of Lemma \ref{lem2a} goes through without any change.
\end{proof}

{}From Proposition \ref{prop2} and Lemma \ref{lem2b}
we now conclude that Theorem
\ref{thm0} holds under the assumption that the
parabolic weights are sufficiently small.

For arbitrary parabolic weights, Theorem \ref{thm0}
is now deduced from the above special case using
Lemma \ref{thaddeus} (as done in Section \ref{step3}).
This completes the proof of Theorem \ref{thm0}.

\section{Existence of universal bundle}

As an application of Theorem \ref{thm0}, we will show
that there is a universal parabolic vector bundle over
$X\times {\mathcal P}{\mathcal M}^\alpha_s$ if and only if
$$
{\rm g.c.d.}(d\, , n\, ,r_{1,1}\, , \cdots\, ,
r_{1,a_1}\, , \cdots\, , r_{\ell,a_1}\, , \cdots\, ,
r_{\ell,a_\ell})\, =\, 1\, .
$$
This was proved earlier by N. Hoffmann \cite[Corollary 6.3]{Ho}.

If ${\rm g.c.d.}(d\, , n\, ,r_{1,1}\, , \cdots\, ,
r_{1,a_1}\, , \cdots\, , r_{\ell,a_1}\, , \cdots\, ,
r_{\ell,a_\ell})\, \not=\, 1$, then from Theorem \ref{thm0}
it follows immediately that there is no universal vector
bundle over $X\times {\mathcal P}{\mathcal M}^\alpha_s$;
in particular, there is no universal parabolic bundle.

To prove the converse, assume that
\begin{equation}\label{con.}
{\rm g.c.d.}(d\, , n\, ,r_{1,1}\, , \cdots\, ,
r_{1,a_1}\, , \cdots\, , r_{\ell,a_1}\, , \cdots\, ,
r_{\ell,a_\ell})\, =\, 1\, .
\end{equation}
Take any parabolic vector bundle
$$
E_*\, :=\, (E\, , \{F_{i,1} \supsetneq\cdots\,
\supsetneq F_{i,a_i}\}_{i=1}^\ell)\, \in\,
{\mathcal P}{\mathcal M}^\alpha_s\, .
$$
Consider the complex lines
$\bigwedge^{\text{top}}F_{i,j}$, where $i\, \in\, [1\, ,\ell]$
and $j\, \in\, [1\, ,a_i]$, together with the complex line
$$
\text{Det}\, E\, :=\, \bigwedge\nolimits^{\text{top}}
H^0(X,\, E)\bigotimes \bigwedge\nolimits^{\text{top}}
H^1(X,\, E)^*\, .
$$
Let $\{L_i\}_{i=1}^{m_0}$ be this collection of complex
lines.

Automorphisms of a stable parabolic vector bundle are
scalar multiplications. In other words,
$$
{\mathbb G}_m\, =\, \text{Aut}(E_*)\, .
$$
Note that $\text{Aut}(E_*)$
acts on the line $L_i$ for each $i\, \in\, [1\, ,m_0]$.

{}From \eqref{con.} it follows that there are integers
$e_i\, \in\, {\mathbb Z}$, $i\, \in\, [1\, ,m_0]$, with
some $e_i$ nonzero, such that
the group $\text{Aut}(E_*)$ acts trivially on the line
$$
\bigotimes_{i=1}^{m_0} L^{\otimes e_i}_i\, .
$$
Using this it can be shown that in the geometric invariant
theoretic construction of the moduli space ${\mathcal P}
{\mathcal M}^\alpha_s$, a suitable twist of the universal
parabolic vector bundle 
descends to $X\times {\mathcal P}{\mathcal M}^\alpha_s$.
(See \cite[p. 465, Proposition 3.2]{BY} for the details.)

\medskip
\noindent
\textbf{Acknowledgements.}\,
We are very grateful to the referee for comments to improve the
manuscript.


\end{document}